\def\ddd{ {\bf \Delta}}
\def\h{ {\cal H} }
\def\a{ {\cal A} }
\def\l{ {\cal L} }
\def\n{ {\cal N} }
\def\b{ {\cal B} }
\def\u{ {\cal U} }
\def\m{ {\cal M} }
\def\t{ {\cal T} }
\def\s{ {\cal S} }
\def\e{ {\cal E} }
\def\p{ {\cal P} }
\def\k{ {\cal K} }
\def\z{ {\cal Z} }
\def\mumu{{\bf \mathfrak{m}}}
\def\pepe{{\bf \mathfrak{p}}}
\def\noi{\noindent}
\def\GG{\mathbb{G}}
\def\FF{{\bf F}}
\def\LL{{\bf L}}
\newtheorem{teo}{Theorem}[section]
\newtheorem{prop}[teo]{Proposition}
\newtheorem{lem}[teo]{Lemma}
\newtheorem{coro}[teo]{Corollary}
\newtheorem{defi}[teo]{Definition}
\theoremstyle{definition}
\newtheorem{rem}[teo]{Remark}
\title{A note on  common complements}
\author{Esteban Andruchow and Eduardo Chiumiento}
\begin{document}

\maketitle

\begin{abstract}
We discuss the structure of the set $\Delta$ consisting of pairs of closed subspaces  that have a common complement in a Hilbert space previously studied by Lauzon and Treil (J. Funct. Anal. 212: 500--512, 2004). 
We prove that $\Delta$ is the base space of a real analytic fiber bundle constructed in terms of geometric objects associated to the Grassmann manifold.   As a  consequence we determine the homotopy type of $\Delta$.
\end{abstract}

\bigskip

{\bf 2020 MSC:} 47B02; 57N20; 58B10


{\bf Keywords:}  Common complement; complementary subspaces; Grassmann manifold; Banach-Lie group; fiber bundle; contractible space.

\section{Introduction}\label{section 1}
Let $\h$ be a separable complex Hilbert space, and denote by $\mathbb{G}(\h)$ the Grassmann manifold of $\h$ consisting of all the closed subspaces of $\h$.
Consider the set  $\Delta$ of all pairs of subspaces that admit a common complement, which can be described as follows 
$$
\Delta:=\{ (\s,\t) \in \GG(\h) \times \GG(\h) :  \exists \z \in \GG(\h) \hbox{ such that } \z \dot{+} \s =\z \dot{+} \t =\h \}.
$$ 
Here the symbol $\dot{+}$ stands for direct sum ($\s\dot{+}\z=\h$ means that $\s+\z=\h$ and $\s\cap\z=\{0\}$). Several characterizations of this set were given in the seminal work by Lauzon and Treil  \cite{lauzon treil}.
Later on Giol \cite{giol} linked subspaces with common complement and the problem of estimating the number of segments between homotopic idempotents. 
More recently, we proved in \cite{complemento comun} that $\Delta$ is an open  subset of $\GG(\h) \times \GG(\h)$, determined its connected components, and we showed that  its complement  $(\GG(\h) \times \GG(\h))\setminus \Delta$ is a Banach manifold with components described by dimensions and semi-Fredholm indices. In this note we obtain finer results on the topological and differentiable structure of $\Delta$.

We  will use several  geometric constructions related to the Grassmann manifold $\GG(\h)$. This manifold has been thoroughly studied (see, e.g., \cite{cpr, K79, pr}). It has been particularly useful to identify a closed subspace $\s$ with the orthogonal projection $P_\s$ onto $\s$, or with the symmetry $\epsilon_\s=2P_\s-1$ which is the identity on $\s$ and minus the identity in $\s^\perp$. This allows to endow $\GG(\h)$ with the relative topology as a real analytic submanifold of $\b(\h)$, the algebra of bounded linear operators acting on $\h$. Another way to construct a manifold structure on $\GG(\h)$ is to define for each  $\z\in\GG(\h)$, the  following set
$$
\GG^\z:=\{\s\in\GG(\h): \s\dot{+}\z=\h\}.
$$
Subspaces in $\GG^\z$  turn out to be be in one-to-one correspondence with graphs of operators in $\b(\z^\perp, \z)$, and then an atlas can be constructed by defining charts on the family of sets $\{  \GG^\z\}_{\z \in \GG(\h)}$. 
 The presentations of the Grassmann manifold as subspaces or orthogonal projections are indeed the same in the sense of differential geometry, meaning that the  bijective map $\s \to P_\s$ is  a real analytic diffeomorphism (\cite{AM07}).

We now describe our two main results on the structure of $\Delta$. Denote by $Gl(\h)$ the linear group of $\h$ (i.e., bounded linear invertible operators in $\h$) and by $\u(\h)$ the subgroup of unitary operators.
A useful tool in our study will be, for a given closed subspace $\z\subset\h$, the subgroup
$$
Gl^\z:=\{G\in Gl(\h): G(\z)=\z\}.
$$
Clearly, if $\s\dot{+}\z=\h$ and $G\in Gl^\z$, then $G(\s)\dot{+}\z=\h$. This means that  $Gl^\z$ acts on the open sets $\GG^\z \subseteq \GG(\h)$ as follows $G \cdot \s=G(\s)$, for all $\s \in \GG^\z$ and $G \in Gl^\z$. Furthermore, we prove that $Gl^\z$ is a Banach-Lie group and $\GG^\z$ is a real analytic homogeneous space of $Gl^\z$. 
We then construct a real analytic fiber bundle in which the base space is the open set  $\Delta \subseteq \GG(\h) \times \GG(\h)$ and the  total space is given by 
$$
\e:=\bigsqcup_{\z\in\GG(\h)} Gl^\z\times Gl^\z. 
$$
The set $\e$ can be endowed with a manifold structure by using the same ideas of the frame bundle construction in classical differential geometry, and the motivation for considering it comes from the fact that  $(G(\z^\perp),K(\z^\perp)) \in \Delta$ for every $\z \in \GG(\h)$ and $G,K \in Gl^\z$ because $\z^\perp \oplus \z=\h$. This, in turn,  leads us to define the following map
$$
\mathfrak{p}: \e \to \Delta, \, \, \, \mathfrak{p}(Z, G,K)=(G(\z^\perp), K(\z^\perp)).
$$
The first main result of this note  is  that  $\mathfrak{p}$ is a real analytic fiber bundle  (Theorem \ref{pepe fbundle}). Here $\Delta$ is considered with its natural manifold structure as an open subset of $\GG(\h) \times \GG(\h)$.
 

The connected components of $\GG(\h)$ are given by $\GG_{ij}$, where $i,j \in \{0, 1, \ldots , \infty  \}$ are the dimensions and codimensions of the subspaces ($i + j=\infty$); and  
it is known that $\GG_\infty:=\GG_{\infty, \infty}$ is contractible, a fact which follows directly form Kuiper's theorem \cite{kuiper}. 
It was proved in \cite{complemento comun}  that the connected components of $\Delta$  are $\Delta_{ij}:=\GG_{ij} \times \GG_{ij}$ for $i < \infty$ or $j < \infty$, and $\Delta_\infty:=\Delta \cap (\GG_\infty \times \GG_\infty)$. Furthermore, it was shown that $\Delta_\infty$ is dense in $\GG_\infty \times \GG_\infty$. Since $\GG_{i,\infty}$ and $\GG_{\infty,i}$ for $i < \infty$,   have the homotopy type of the classifying space of the finite dimensional linear group $Gl(i)$,   it follows that $\Delta_\infty$ becomes the only component of topological interest. 
The second main result of this note  is  that $\Delta_\infty$ is contractible (Theorem \ref{delta contractil}).


\section{A fiber bundle with base space $\Delta$}

\subsection{The Banach-Lie group $Gl^\z$}\label{sec 1}


We will work in the setting of real analytic Banach manifolds. For the notions of Banach-Lie group, analytic homogeneous space, submanifolds and others we refer to \cite{beltita, Up85}. In particular, we observe that the notion of submanifold we consider also appears in the literature as complemented submanifold. We denote $R(T)$ and $N(T)$  the range and nullspace of an operator $T\in \b(\h)$.  For $\l,\k\subseteq\h$  closed subspaces such that $\l\dot{+}\k=\h$, we denote by $P_{\l\parallel\k}$ the idempotent with range $\l$ and nullspace $\k$. In particular, $P_\s=P_{\s \parallel \s^\perp}$ is the orthogonal projection onto $\s$.

\begin{rem}\label{buckz}
Buckholtz proved in \cite{buckholtz} the following results. 
\begin{enumerate}
\item[i)] Let $P,Q$ be orhogonal projections. Then the following are equivalent:
\begin{enumerate}
\item
$P-Q$ is invertible.
\item
$\|P+Q-1\|<1$.
\item
$R(P)\dot{+}R(Q)=\h$.
\end{enumerate}
\item[ii)] If $\l,\k\subset\h$ are closed subspaces such that $\l\dot{+}\k=\h$, then $P_\l-P_\k$ is invertible, and  
\begin{equation}\label{parallel}
 P_{\l\parallel\k}=P_\l(P_\l-P_\k)^{-1}.
\end{equation}   
\end{enumerate}
\end{rem}

\begin{rem}\label{rem on gz}
As an immediate consequence the sets $\GG^\z=\{\s\in\GG(\h): \s\dot{+}\z=\h\}$, $\z \in \GG(\h)$,
are open in $\GG(\h)$. 
Indeed,  $\s\in\GG^\z$ if and only if $\|P_\s+P_\z-1\|<1$, which is an open condition on $P_\s$. 
Consequently, $\GG^\z$ is a real analytic submanifold of $\b(\h)$ by the identification $\s \mapsto P_\s$. Moreover, the map 
 $ \varphi_ \z : \GG^\z \to \b(\z^\perp, \z)$ defined by $\varphi_\z(\s)=P_\z (P_{\z^\perp}|_\s)^{-1}$,  
is a chart for $\GG(\z)$ at $\z^\perp$, and thus a homeomorphism onto the Banach space $\b(\z^\perp, \z)$ of bounded
linear operators from $\z$ to $\z^\perp$ (see, e.g., \cite{AM07}). Hence the sets $\GG^\z$, $\z \in \GG(\h)$, are also contractible. 
\end{rem}

\begin{prop}
The group $Gl^\z=\{G\in Gl(\h): G(\z)=\z\}$, where $\z \in \GG(\h)$, is a Banach-Lie group. Moreover, it is a  real analytic submanifold of $\b(\h)$.
\end{prop}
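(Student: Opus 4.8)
The plan is to identify $Gl^\z$ with the group of invertible elements of a suitable closed unital subalgebra of $\b(\h)$, and then to invoke the standard fact that the invertible group of a unital Banach algebra is an open subset which carries a natural Banach-Lie group structure. First I would fix the orthogonal decomposition $\h=\z\oplus\z^\perp$ and write each $G\in\b(\h)$ as a $2\times 2$ block operator matrix
$$
G=\begin{pmatrix} A & B \\ C & D \end{pmatrix}, \qquad A\in\b(\z),\ B\in\b(\z^\perp,\z),\ C\in\b(\z,\z^\perp),\ D\in\b(\z^\perp).
$$
The invariance condition $G(\z)\subseteq\z$ is then equivalent to $C=0$, so the invariant-subspace algebra $\b_\z:=\{T\in\b(\h):T(\z)\subseteq\z\}$ is precisely the set of block upper-triangular matrices. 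It is clearly a closed unital subalgebra of $\b(\h)$, and it is a complemented subspace: the map $T\mapsto P_{\z^\perp}TP_\z$ is a bounded idempotent on $\b(\h)$ whose range is the block $\b(\z,\z^\perp)$ and whose kernel is exactly $\b_\z$. Hence $\b_\z$ is a real analytic (complemented) submanifold of $\b(\h)$.

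The heart of the argument is the identification $Gl^\z=(\b_\z)^{-1}$, where $(\b_\z)^{-1}$ denotes the group of invertible elements of the algebra $\b_\z$. For the inclusion $\subseteq$ I would observe that if $G\in Gl(\h)$ satisfies $G(\z)=\z$, then applying $G^{-1}$ to both sides yields $G^{-1}(\z)=\z$; thus both $G$ and $G^{-1}$ lie in $\b_\z$, so $G\in(\b_\z)^{-1}$. Conversely, if $G\in(\b_\z)^{-1}$, then $G,G^{-1}\in\b_\z$, so from $G^{-1}(\z)\subseteq\z$ one gets $\z\subseteq G(\z)\subseteq\z$, that is $G(\z)=\z$, whence $G\in Gl^\z$. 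I would emphasize that it is exactly the \emph{equality} $G(\z)=\z$ in the definition (rather than mere invariance $G(\z)\subseteq\z$) that makes this equivalence work, since in infinite dimensions an invertible operator may leave $\z$ invariant without its inverse doing so.

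To finish, I would recall the standard results from the theory of Banach-Lie groups (as in \cite{beltita, Up85}): the invertible group $(\b_\z)^{-1}$ of the unital Banach algebra $\b_\z$ is open in $\b_\z$, and it is a Banach-Lie group, with multiplication real analytic because bilinear and continuous, and inversion real analytic because it is locally given by a convergent Neumann-type series. Consequently $Gl^\z=(\b_\z)^{-1}$ is an open subset of $\b_\z$, hence an open submanifold of $\b_\z$ and therefore a real analytic submanifold of $\b(\h)$, and it inherits the Banach-Lie group structure. The main obstacle I expect is the key identification $Gl^\z=(\b_\z)^{-1}$, and more precisely the control of the inverse: everything reduces to the subtle point that $G(\z)=\z$ forces $G^{-1}(\z)=\z$, which is what guarantees that $\b_\z$ is closed under the inversion it inherits from $\b(\h)$; once this is in place, the submanifold and group statements are routine consequences of the Banach-algebra machinery.
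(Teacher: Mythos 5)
Your proof is correct and follows essentially the same route as the paper, whose proof is a one-line remark that in the block decomposition $\h=\z\oplus\z^\perp$ the elements of $Gl^\z$ are the upper triangular elements of $Gl(\h)$. Your version is in fact more careful on the one point the paper glosses over: the correct identification is with the invertible group $(\b_\z)^{-1}$ of the upper-triangular algebra (so that $G^{-1}$ is also upper triangular), not with $Gl(\h)\cap\b_\z$, which in infinite dimensions is strictly larger and not a group (e.g., the inverse bilateral shift on $\ell^2(\mathbb{Z})$ relative to $\z=\overline{\mathrm{span}}\{e_n:n\le 0\}$ is invertible and upper triangular but maps $\z$ onto a proper subspace of itself), and your explicit check that $G(\z)=\z$ forces $G^{-1}(\z)=\z$ is exactly what closes this gap.
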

\begin{proof}
The proof is straightforward writing the elements of $Gl^\z$ as $2\times 2$ matrices in terms of the decomposition $\h=\z\oplus\z^\perp$: they are the upper triangular elements of $Gl(\h)$.
\end{proof}

The linear group $Gl(\h)$ of $\h$ acts analytically on $\GG(\h)$ in a natural fashion:
$$
G\cdot \s=G(\s), \hbox{ for } G\in Gl(\h), \s\in\GG(\h).
$$
This means that  the map 
$
Gl^\z\times\GG^\z\to \GG^\z, \ (G,\s)\mapsto G(\s)
$
is  real analytic, a fact that follows by the diffeomorphism with the presentation of $\GG(\h)$ as orthogonal projections and the formula  
(\cite{ando}):
$$
P_{G(\s)}=GP_\s G^{-1}\left( GP_\s G^{-1} +(GP_\s G^{-1})^*-1\right)^{-1}.
$$
As we have already observed, given $\z \in \GG(\h)$ the group $Gl^\z$ acts on $\GG^\z$ by $G \cdot \s=G(\s)$, $\s \in \GG^\z$ and $G \in Gl^\z$.  That is, this action is obtained  by  restriction of the above action of $Gl(\h)$ on $\GG(\h)$, and  since $\GG^\z$ and $Gl^\z$ are submanifolds,  we get that  the action of $Gl^\z$  on $\GG^\z$ is also analytic. 
 Fixing the element $\s_0\in\GG^\z$, this action induces the following map 
$$ \pi_{\s_0}:Gl^\z\to \GG^\z, \ \pi_{\s_0}(G)=G(\s_0).$$
Next we see that the action is transitive, or equivalently, the map $\pi_{\s_0}$ is onto. To this end and other differential properties of this map, we introduce the following operators.

\begin{defi}
For $\s, \t, \z \in \GG(\h)$ such that $\z \dot{+}\s=\z \dot{+}  \t=\h$, we write
$$\LL^\z_{\s,\t}:=P_{\z\parallel\s}+P_{\t\parallel\z}P_{\s\parallel\z}.$$
\end{defi} 


\begin{lem}\label{propiedades  de mapa}
For $\s, \t, \z \in \GG(\h)$ as above, we have    $\LL^\z_{\s,\t}(\s)=\t$ and $\LL^\z_{\s,\t} \in Gl^\z$. Furthermore, the map $\GG^\z \times \GG^\z \to Gl^\z$, $(\s,\t)\mapsto \LL^\z_{\s,\t}$ 
 is real analytic.
\end{lem}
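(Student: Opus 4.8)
The plan is to establish the three assertions in turn, relying on the algebra of the oblique idempotents together with Buckholtz's formula \eqref{parallel}. First I would rewrite $\LL^\z_{\s,\t}$ in a form that makes its invertibility transparent. Since $P_{\z\parallel\s}$ and $P_{\s\parallel\z}$ are the complementary idempotents of the decomposition $\h=\z\dot{+}\s$, we have $P_{\z\parallel\s}=1-P_{\s\parallel\z}$; likewise $1-P_{\t\parallel\z}=P_{\z\parallel\t}$ for the decomposition $\h=\z\dot{+}\t$. Substituting gives
$$
\LL^\z_{\s,\t}=1-P_{\z\parallel\t}P_{\s\parallel\z}.
$$
Writing $N:=P_{\z\parallel\t}P_{\s\parallel\z}$, the key observation is that $P_{\s\parallel\z}P_{\z\parallel\t}=0$, because the range $\z$ of $P_{\z\parallel\t}$ is exactly the nullspace of $P_{\s\parallel\z}$; hence $N^2=0$ and $\LL^\z_{\s,\t}=1-N$ is invertible, with inverse $1+N$.

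Next I would check membership in $Gl^\z$ and the identity $\LL^\z_{\s,\t}(\s)=\t$. For $x\in\z$ one has $P_{\s\parallel\z}(x)=0$, so $N(x)=0$ and $\LL^\z_{\s,\t}(x)=x$; thus $\LL^\z_{\s,\t}$ restricts to the identity on $\z$, in particular $\LL^\z_{\s,\t}(\z)=\z$, and together with invertibility this yields $\LL^\z_{\s,\t}\in Gl^\z$. For the image of $\s$ I would use the original expression: if $x\in\s$ then $P_{\z\parallel\s}(x)=0$ and $P_{\s\parallel\z}(x)=x$, so $\LL^\z_{\s,\t}(x)=P_{\t\parallel\z}(x)\in\t$, giving $\LL^\z_{\s,\t}(\s)\subseteq\t$. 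To upgrade this to equality I would apply the invertible map $\LL^\z_{\s,\t}$ to the decomposition $\h=\z\dot{+}\s$, obtaining $\h=\z\dot{+}\LL^\z_{\s,\t}(\s)$; since $\LL^\z_{\s,\t}(\s)$ is then a complement of $\z$ contained in the complement $\t$ of $\z$, the two must coincide. (Alternatively one verifies directly that $P_{\t\parallel\z}|_\s:\s\to\t$ is a bounded bijection.)

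Finally, for the real analyticity of $(\s,\t)\mapsto\LL^\z_{\s,\t}$, I would express the three oblique idempotents through \eqref{parallel}:
$$
P_{\z\parallel\s}=P_\z(P_\z-P_\s)^{-1}, \qquad P_{\s\parallel\z}=P_\s(P_\s-P_\z)^{-1}, \qquad P_{\t\parallel\z}=P_\t(P_\t-P_\z)^{-1},
$$
all well defined on $\GG^\z\times\GG^\z$ since the relevant differences are invertible by Remark \ref{buckz}. The maps $\s\mapsto P_\s$ and $\t\mapsto P_\t$ are real analytic, inversion is real analytic on the open set of invertible elements of $\b(\h)$, and multiplication is continuous bilinear; hence the displayed formula exhibits $(\s,\t)\mapsto\LL^\z_{\s,\t}$ as a real analytic map into $\b(\h)$. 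As its image lies in $Gl^\z$, the map is real analytic into $Gl^\z$ as well.

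I expect the only genuinely delicate point to be this last transfer of analyticity from $\b(\h)$ to the submanifold $Gl^\z$. It is harmless here because $Gl^\z$ is open in the complemented (upper-triangular) subspace of $\b(\h)$, so the inclusion is a real analytic embedding and factoring through it preserves analyticity; the algebraic identities in the first two paragraphs are then routine once the rewriting $\LL^\z_{\s,\t}=1-N$ with $N^2=0$ is in hand.
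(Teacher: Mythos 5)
Your proof is correct, and it follows the same overall architecture as the paper's (pointwise verification on $\z$ and $\s$, then analyticity via Buckholtz's formula $P_{\l\parallel\k}=P_\l(P_\l-P_\k)^{-1}$ composed with inversion), but the invertibility step is handled by a genuinely different and slicker device. The paper proves injectivity directly (decompose $\chi=\xi+\zeta\in\s\dot{+}\z$ in the kernel, use that $P_{\t\parallel\z}\xi$ and $\zeta$ lie in the complementary subspaces $\t$ and $\z$) and gets surjectivity from the observation that $P_{\t\parallel\z}|_\s:\s\to\t$ is an isomorphism, so that $\h=\t+\z\subseteq R(\LL^\z_{\s,\t})$; in particular it establishes $\LL^\z_{\s,\t}(\s)=\t$ \emph{first} and uses it to conclude invertibility. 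You reverse the logic: rewriting $\LL^\z_{\s,\t}=1-N$ with $N=P_{\z\parallel\t}P_{\s\parallel\z}$ and noting $N^2=0$ (since $R(P_{\z\parallel\t})=\z=N(P_{\s\parallel\z})$) gives invertibility for free with the explicit inverse $1+N$, and then $\LL^\z_{\s,\t}(\s)=\t$ follows by pushing the decomposition $\h=\z\dot{+}\s$ forward and comparing the two complements of $\z$ inside $\t$. Your route buys an explicit formula for the inverse and avoids the kernel computation; the paper's route exhibits the isomorphism $P_{\t\parallel\z}|_\s$ directly, which is reused implicitly elsewhere. The final remark about restricting the analytic $\b(\h)$-valued map to the submanifold $Gl^\z$ matches the paper's justification.
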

\begin{proof}
We set $L=\LL^\z_{\s,\t}$. If $\zeta\in\z$, then $L\zeta=P_{\z \parallel\s}\zeta=\zeta$. If $\xi\in\s$, then $L\xi=P_{\t\parallel\z}\xi\in\t$. Thus $L(\z)=\z$ and $L(\s)\subset\t$. This implies that $N(L)=\{0\}$: if $\chi\in N(L)$, write $\chi=\xi+\zeta$ for $\xi\in\s$ and $\zeta\in\z$. Then $0=L(\xi)+L(\zeta)=P_{\t\parallel\s}\xi + \zeta$, these vectors belong to complementary subspaces, thus $\zeta=0$ and $P_{\t\parallel\z}\xi=0$, and thus also $\xi\in\z$, i.e., $\zeta=0=\xi$. Clearly $\z\subset R(L)$. Note that in fact $L(\s)=\t$: $P_{\t\parallel\z}|_\s:\s\to\t$ is an isomorphism. In particular this implies that $L$ is invertible: $\h=\t+\z\subset R(L)$. Moreover, in view of (\ref{parallel}) we have that 
$$
\LL^\z_{\s,\t}=P_\z(P_\z-P_{\s})^{-1}+P_\t(P_\t-P_\z)^{-1}P_{\s}(P_{\s}-P_\z)^{-1},
$$
which is clearly a real analytic map for $(\s, \t) \in \GG^\z \times \GG^\z$ taking  values in $\b(\h)$. The restriction to $Gl^\z$ is also real analytic, since we have shown that this group is a  submanifold of $\b(\h)$. 
\end{proof}

The main properties relating the open sets $\GG^\z$ and the Banach-Lie group $Gl^\z$ can be stated as follows.

\begin{prop}\label{teo 25}
The map  $\pi_{\s_0}$ is a real analytic submersion, with an analytic global cross section. Moreover, since $\GG^\z$ is a real analytic homogeneous space of $Gl^\z$, the map $\pi_{\s_0}$ is a trivial fiber bundle.
\end{prop}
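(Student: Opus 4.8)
The plan is to feed Lemma~\ref{propiedades  de mapa} into a standard equivariance argument: the lemma hands us an explicit global analytic cross section, and everything else is a formal consequence. First I would fix $\s_0\in\GG^\z$ and define $s:\GG^\z\to Gl^\z$ by $s(\t)=\LL^\z_{\s_0,\t}$. By Lemma~\ref{propiedades  de mapa} this map is real analytic, takes values in $Gl^\z$, and satisfies $\LL^\z_{\s_0,\t}(\s_0)=\t$, so that $\pi_{\s_0}(s(\t))=\t$; hence $s$ is a real analytic global cross section of $\pi_{\s_0}$, and in particular $\pi_{\s_0}$ is onto, i.e.\ the action is transitive. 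A one-line computation also records that $s(\s_0)=\LL^\z_{\s_0,\s_0}=P_{\z\parallel\s_0}+P_{\s_0\parallel\z}=1$, using that $P_{\s_0\parallel\z}$ is idempotent and that $P_{\z\parallel\s_0}+P_{\s_0\parallel\z}=1$ for the decomposition $\h=\z\dot{+}\s_0$.

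Next I would establish that $\pi_{\s_0}$ is a submersion. Differentiating the identity $\pi_{\s_0}\circ s=\mathrm{id}_{\GG^\z}$ gives $(d\pi_{\s_0})_{s(\t)}\circ(ds)_\t=\mathrm{id}$, so $(d\pi_{\s_0})_{s(\t)}$ is surjective and its kernel is complemented, a bounded projection onto the image of $(ds)_\t$ being $(ds)_\t\circ(d\pi_{\s_0})_{s(\t)}$. Evaluating at $\t=\s_0$, where $s(\s_0)=1$, shows $\pi_{\s_0}$ is a submersion at the identity. The map is equivariant, since $\pi_{\s_0}(GG')=G\cdot\pi_{\s_0}(G')$ yields $\pi_{\s_0}\circ L_G=A_G\circ\pi_{\s_0}$, where left translation $L_G$ on $Gl^\z$ and the action $A_G$ on $\GG^\z$ are real analytic diffeomorphisms. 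As the submersion property is stable under pre- and post-composition with diffeomorphisms, it propagates from the identity to every point of $Gl^\z$.

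Finally I would upgrade this to a trivial fiber bundle. Because $\pi_{\s_0}$ is a submersion, the isotropy subgroup $H=\pi_{\s_0}^{-1}(\s_0)=\{G\in Gl^\z: G(\s_0)=\s_0\}$ is a closed, complemented submanifold, hence a Banach--Lie subgroup, and the general theory of analytic homogeneous spaces (\cite{beltita, Up85}) identifies $\GG^\z$ with $Gl^\z/H$ and realizes $\pi_{\s_0}$ as a locally trivial principal $H$-bundle. The global section $s$ trivializes it: the map $\Phi:Gl^\z\to\GG^\z\times H$, $\Phi(G)=(\pi_{\s_0}(G),s(\pi_{\s_0}(G))^{-1}G)$, is real analytic with real analytic inverse $(\t,h)\mapsto s(\t)h$ and satisfies $\mathrm{pr}_1\circ\Phi=\pi_{\s_0}$, exhibiting $\pi_{\s_0}$ as a trivial bundle with fiber $H$. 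I expect the only delicate point to be bookkeeping rather than substance: checking that $s(\pi_{\s_0}(G))^{-1}G$ genuinely fixes $\s_0$ (so lies in $H$) and that all the composite maps remain analytic as maps into the submanifolds $Gl^\z$ and $H$, and not merely into $\b(\h)$. The one piece of real input is the section $s$ produced from $\LL^\z_{\s_0,\t}$; once it is in hand, both the submersion and the triviality follow formally.
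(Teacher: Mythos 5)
Your proposal is correct and follows the same route as the paper: both arguments hinge on the global analytic cross section $\t\mapsto\LL^\z_{\s_0,\t}$ supplied by Lemma~\ref{propiedades  de mapa}, from which surjectivity, the submersion property, and the triviality of the bundle all follow. The only difference is that you spell out the equivariance argument and the explicit trivialization $G\mapsto(\pi_{\s_0}(G),s(\pi_{\s_0}(G))^{-1}G)$, which the paper delegates to the general theory of homogeneous spaces in \cite{beltita}.
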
 
\begin{proof}
Given  $\s_0$ and $\t$  closed subspaces in $\GG^\z$, we take the operator 
$
\LL^\z_{\s_0,\t}
$, 
 which satisfies that  $\LL^\z_{\s_0,\t}\in Gl^\z$ and $\pi_{\s_0}(\LL^\z_{\s_0,\t})=\t$. Thus, $\pi_{\s_0}$ is onto. Moreover, 
note have that 
$$
\sigma_{\s_0}:\GG^\z\to Gl^\z, \ \sigma_{\s_0}(\t)=\LL^\z_{\s_0,\t},
$$
is a real analytic global cross section for $\pi_{\s_0}$. 
Therefore, $\pi_{\s_0}:Gl^\z\to\GG^\z$ is a homogeneous space with a global cross section, it follows that it is a trivial bundle (see \cite{beltita}).
\end{proof}    


\subsection{The manifold $\e$}

 For notational simplicity, we let  
$$\l_{\z,\s}:=\LL^\z_{\s,\z^\perp}=P_{\z\parallel\s}+P_{\z^\perp}P_{\s\parallel\z},$$
whenever $\z \in \GG(\h)$ and $\s \in \GG^\z$. Note that $\l_{\z,\s}(\s)=\z^\perp$ and $\l_{\z,\s}\in Gl^\z$. We observe that the map $\GG^\z  \to Gl^\z$, $\s \to \l_{\z, \s}$, is real analytic as a consequence of Lemma  \ref{propiedades  de mapa}

The following auxiliary space will be the total space of our fiber bundle, and will play an important role the study of the topology of $\Delta$. 

\begin{defi}
Put
\begin{align*}
\e & :=\bigsqcup_{\z\in\GG(\h)} Gl^\z\times Gl^\z  =\{(\z,G,K)\in\GG(\h)\times Gl(\h)\times Gl(\h): G(\z)=K(\z)=\z \}.
\end{align*}
\end{defi}

\begin{prop}\label{manifold E}
$\e$ is a real analytic manifold.
\end{prop}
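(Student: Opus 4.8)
The plan is to exhibit $\e$ as a manifold by constructing an explicit atlas whose charts are built from the already-established structures on $\GG(\h)$ and $Gl^\z$, in the spirit of the frame bundle construction alluded to in the introduction. First I would set up the candidate local trivializations. Recall that each $\GG^\z$ is an open set in $\GG(\h)$ diffeomorphic (via $\varphi_\z$) to the Banach space $\b(\z^\perp,\z)$, and that $Gl^\z$ is a Banach-Lie group which is a real analytic submanifold of $\b(\h)$. For a fixed base point $\z_0 \in \GG(\h)$, the natural neighborhood of points of $\e$ lying over $\z_0$ is the preimage of $\GG^{\z_0^\perp}$ under the projection $(\z,G,K) \mapsto \z$; that is, I would chart the piece of $\e$ sitting over those $\z$ that are complementary to $\z_0^\perp$, equivalently over $\z \in \GG^{\z_0^\perp}$.

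The key device is the family of isomorphisms $\l_{\z_0^\perp,\,\cdot}$, or more precisely the operators $\LL^{\z}_{\s,\t}$, that move a fixed reference subspace to a variable one while preserving the relevant complement. The strategy is: given $\z \in \GG^{\z_0^\perp}$ close to $\z_0$, use the real analytic assignment $\z \mapsto \LL^{\z_0^\perp}_{\z_0,\z}$ (an element of $Gl^{\z_0^\perp}$ carrying $\z_0$ to $\z$, supplied by Lemma \ref{propiedades de mapa}) to produce an invertible operator $T_\z$ with $T_\z(\z_0)=\z$, depending real analytically on $\z$. Conjugation $G \mapsto T_\z^{-1} G T_\z$ then carries $Gl^\z$ isomorphically onto the \emph{fixed} group $Gl^{\z_0}$. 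This gives a bijection from the portion of $\e$ over $\GG^{\z_0^\perp}$ onto the product $\GG^{\z_0^\perp} \times Gl^{\z_0} \times Gl^{\z_0}$, namely $(\z,G,K) \mapsto (\z,\, T_\z^{-1} G T_\z,\, T_\z^{-1} K T_\z)$, whose target is manifestly a real analytic manifold (an open set of $\GG(\h)$ times two copies of a Banach-Lie group). I would declare these to be charts.

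It then remains to verify that these charts cover $\e$ and that the transition maps between overlapping charts are real analytic. Covering is clear since every $\z \in \GG(\h)$ lies in some $\GG^{\z_0^\perp}$ (take $\z_0=\z$, noting $\z \dot{+}\z^\perp=\h$). For the transition maps, on an overlap one has two reference points $\z_0,\z_1$, and the change of coordinates is governed by the composite $T_\z^{(1)\,-1} T_\z^{(0)}$, which conjugates $Gl^{\z_0}$ to $Gl^{\z_1}$; since both $\z \mapsto T_\z^{(0)}$ and $\z \mapsto T_\z^{(1)}$ are real analytic $Gl(\h)$-valued maps, and inversion and multiplication in $Gl(\h)$ are real analytic, the transitions are compositions of real analytic maps and hence real analytic. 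The main obstacle I anticipate is the bookkeeping required to make the trivializing operators $T_\z$ genuinely well-defined and real analytic across the chart --- in particular ensuring that $T_\z \in Gl^{\z_0^\perp}$ maps $\z_0$ onto $\z$ while the induced conjugation does land $Gl^\z$ onto $Gl^{\z_0}$ --- and checking the cocycle-style compatibility on triple overlaps so the atlas is consistent; the underlying analyticity is then inherited for free from Remark \ref{rem on gz} and Lemma \ref{propiedades de mapa} together with the analyticity of the group operations on $Gl(\h)$.
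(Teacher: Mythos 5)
Your proposal is correct and takes essentially the same route as the paper: both build an atlas frame--bundle style by trivializing $\e$ over open subsets of $\GG(\h)$, using the real analytic family of operators from Lemma~\ref{propiedades  de mapa} to conjugate the variable fibers $Gl^\z\times Gl^\z$ onto a fixed reference group, and then checking that the transition maps are real analytic. The only (harmless) difference is that the paper conjugates further by fixed unitaries so that all charts over a given component land in the single model $\GG^{\z_0}\times (Gl^{\h_+})^2$, whereas your model group $Gl^{\z_0}$ varies with the chart center; since these are all real analytic submanifolds of $\b(\h)$ and mutually isomorphic within a component, this changes nothing.
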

\begin{proof}
Recall that $\GG_{ij}$ are the connected components of $\GG(\h)$, where $i$ and $j$ indicates the dimension and codimension of the subspaces, respectively. Our definition of an atlas depends on those components, so we fix  one component $\GG_{ij}$,  and  two  subspaces $\h_+$ and $\h_-$ such that $\dim \h_-=i$, $\dim \h_+=j$ and $\h_+ \oplus \h_-=\h$.  For a given $\z \in\GG_{ij}$, we denote
$$
\u_{\z}(\h):=\{T\in\u(\h): T(\z^\perp)=\h_+\}.
$$
Then for $\z_0\in\GG_{ij}$, $T_0, T'_0\in\u_{\z_0}(\h)$ we define
$$
\Phi_{\z_0,T_0,T'_0}: \GG^{\z_0}\times Gl^{\h_+}\times Gl^{\h_+} \to \e, 
$$
$$
\Phi_{\z_0,T_0,T'_0}(\z,G,K)=(\z,\l_{\z_0,\z}^{-1}T_0^*GT_0\l_{\z_0,\z}, \l_{\z_0,\z}^{-1}(T'_0)^*KT'_0\l_{\z_0,\z}).
$$
This map is well defined: note that $\l_{\z_0,\z}^{-1}T_0^*GT_0\l_{\z_0,\z}$ is an invertible operator which maps $\z$ onto $\z$:
$$
\z\stackrel{\l_{\z_0,\z}}{\longrightarrow}\z_0^\perp\stackrel{T_0}{\longrightarrow}\h_+\stackrel{G}{\longrightarrow}\h_+\stackrel{T_0^*}{\longrightarrow}\z_0^\perp\stackrel{\l_{\z_0,\z}^{-1}}{\longrightarrow} \z,
$$
and similarly for the last coordinate of $\Phi_{\z_0,T_0,T'_0}(\z,G,K)$.
Note also that $\Phi_{\z_0,T_0,T'_0}$ is injective:
$$
\Phi_{\z_0,T_0,T'_0}(\z,G,K)=\Phi_{\z_0,T_0,T'_0}(\z',G',K'),
$$
clearly implies that  $\z=\z'$, from which $G=G'$ and $K=K'$ readily follow. This map has a local inverse, defined  on the set
$$
\b_{\z_0} := \{(\s,A,B)\in\e: \|P_\s-P_{\z_0}\|<1\}.
$$
If $(\s,A,B)\in\b_{\z_0}$ then $\s\dot{+}\z_0^\perp=\h$, so that $\s,\z_0\in\GG^{\z_0^\perp}$, and $\l_{\z_0,\s}$ is defined, and the map $\Phi_{\z_0,T_0,T'_0}$ can be clearly reversed. Thus we have local charts for $\e$, which model it in the Banach space $\b(\h)^3$. 

Let us check that  the transition maps are analytic. Pick $\z_0, \z_1\in\GG_{ij}$.  Denote $\Phi_1=\Phi_{\z_1,T_1,T'_1}$ and $\Phi_0=\Phi_{\z_0,T_0,T'_0}$, then
$$
\Phi_1^{-1}\circ\Phi_0:(\GG^{\z_0}\cap\GG^{\z_1})\times Gl^{\h_+}\times Gl^{\h_+}\to (\GG^{\z_0}\cap\GG^{\z_1})\times Gl^{\h_+}\times Gl^{\h_+}
$$
$$
\Phi_1^{-1}\circ\Phi_0(\z,G,K)=\Phi^{-1}(\z,\l_{\z_0,\z}^{-1}T_0^*GT_0\l_{\z_0,z},\l_{\z_0,\z}^{-1}(T'_0)^*KT'_0\l_{\z_0,z})=
$$
$$
\left(\z,T_1\l_{\z_1,\z}(\l_{\z_0,\z}^{-1}T_0^*GT_0\l_{\z_0,z})\l_{\z_1,\z}^{-1}T_1^*,
T'_1\l_{\z_1,\z}(\l_{\z_0,\z}^{-1}(T'_0)^*KT'_0\l_{\z_0,z})\l_{\z_1,\z}^{-1}(T'_1)^*\right),
$$
which is analytic. This completes the construction of an atlas for $\e$. 
\end{proof}

\begin{rem}\label{open and others}
The idea for constructing an atlas above is adapted from the frame bundle construction 
in classical differential geometry (see for instance \cite{HN12}). We further remark the following facts regarding the previous proof.

\medskip

\noi $i)$ The charts define a topology on $\e$ in the usual fashion. In particular, the sets 
of the form 
\begin{equation*}
\b_{\z_0} = \{(\s,A,B)\in\e: \|P_\s-P_{\z_0}\|<1\}=\bigsqcup_{\z \in \GG^{\z_0^\perp}} Gl^\z \times Gl^\z
\end{equation*}
 that appear in the proof are open in $\e$, and  the family  $\{ \b_\z \}_{\z \in \GG(\h)}$  is an open covering of $\e$. Also the connected components of $\e$ are given by
$$
\e_{ij}:=\bigsqcup_{\z\in\GG_{ij}} Gl^\z\times Gl^\z,
$$
where $i,j \in \{ 0, 1, \ldots, \infty\}$ and $i + j=\infty$. We sometimes abbreviate $\e_\infty:=\e_{\infty,\infty}$.

\medskip

\noi $ii)$ We have shown in fact that $\e$ is a  submanifold of $\GG(\h)\times\b(\h)^2$ (and since $\GG(\h)$ is a submanifold of $\b(\h)$ \cite{cpr, pr}, we find that $\e$ is also a  submanifold of $\b(\h)^3$). Indeed, we have modeled $\e$ using local charts with values in $\GG^{\z_0}\times (Gl^{\h_+})^2$. As we saw earlier, $\GG^{\z_0}$ is open in $\GG(\h)$, and $Gl^{\h_+}$ is a  submanifold of $\b(\h)$.
\end{rem}

\subsection{A real analytic fiber bundle}

Related to $\e$, we have the following map
\begin{equation*}
\pepe:\e\to\Delta, \,\, \,  \, \pepe(\z,G,K)=(G(\z^\perp),K(\z^\perp)).
\end{equation*}
Note that $\pepe$ is well defined: since $G,K\in Gl^\z$, $\z\oplus\z^\perp=\h$ implies that 
$\z\dot{+}G(\z^\perp)=\h=\z\dot{+}K(\z^\perp)$. Also observe that $\pepe:\e\to\Delta$ is onto. Given $(\s,\t)\in\Delta$, let $\z$ be a common complement for $\s$ and $\t$. Then $\s,\t,\z^\perp$ have common complement $\z$, which enables the existence of $\LL_{ \z^\perp, \s}^\z$ and $\LL_{ \z^\perp ,\t}^\z$ by Lemma \ref{propiedades  de mapa}, and clearly $\pepe(\z,\LL_{ \z^\perp, \s}^\z, \LL_{ \z^\perp ,\t}^\z)=(\s,\t)$. Recall from \cite{complemento comun} that the connected components of $\Delta$ are given by 
$\Delta_{ij}=\GG_{ij} \times \GG_{ij}$ for $i < \infty$ or $j < \infty$, and $\Delta_\infty:=\Delta_{\infty,\infty}:=\Delta \cap (\GG_\infty \times \GG_\infty)$. We thus get that $\pepe(\e_{ij})=\Delta_{ji}$  for all $i$, $j$ such that $i + j=\infty$.

Let us check now that  $\pepe$ is a real analytic fiber bundle. We examine first the fibers: for $(\s_0,\t_0)\in\Delta$, set
$$
\FF_{(\s_0,\t_0)}:=\pepe^{-1}(\s_0,\t_0)=\{(\z,G,K)\in\e: G(\z^\perp)=\s_0, K(\z^\perp)=\t_0\}.
$$
 We have the following characterization.
\begin{prop}\label{prop 48}
Take $(\s_0,\t_0) \in \Delta_{ij}$ and  two  subspaces $\h_+ $, $\h_-$ such that $\dim \h_-=i$, $\dim \h_+=j$ and $\h_+ \oplus \h_-=\h$. Then $\FF_{(\s_0,\t_0)}$ is a closed submanifold of $\e$, and there is a diffeomorphism
\begin{equation*}
\FF_{(\s_0,\t_0)}\simeq \GG^{\s_0} \times (Gl^{\h_+}\cap Gl^{\h_-})^2.
\end{equation*}
\end{prop}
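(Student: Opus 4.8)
The plan is to exhibit $\FF_{(\s_0,\t_0)}$ as the total space of a trivial fibre bundle whose base records the common complement $\z$ and whose fibre records the remaining freedom in $G$ and $K$. First I would pin down the defining constraints: if $(\z,G,K)\in\FF_{(\s_0,\t_0)}$ then $G,K\in Gl^\z$ are invertible, so applying them to $\h=\z\oplus\z^\perp$ gives $\z\dot{+}\s_0=\z\dot{+}\t_0=\h$; hence $\z$ is forced to be a common complement of $\s_0$ and $\t_0$, and conversely every common complement arises (take $G=\l_{\z,\s_0}^{-1}$, $K=\l_{\z,\t_0}^{-1}$). Thus the projection $p(\z,G,K)=\z$ maps $\FF_{(\s_0,\t_0)}$ onto the set of common complements $\GG^{\s_0}\cap\GG^{\t_0}$, and the first task is to recognize this base as $\GG^{\s_0}$, the factor appearing in the statement.

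Next I would compute the fibre of $p$ over a fixed admissible $\z$. By Lemma \ref{propiedades  de mapa} the operator $\l_{\z,\s_0}\in Gl^\z$ satisfies $\l_{\z,\s_0}(\s_0)=\z^\perp$, so $G_0:=\l_{\z,\s_0}^{-1}$ lies in $Gl^\z$ and carries $\z^\perp$ onto $\s_0$; it is therefore an admissible first coordinate. If $G\in Gl^\z$ is any other operator with $G(\z^\perp)=\s_0$, then $G_0^{-1}G$ preserves both $\z$ and $\z^\perp$, i.e. $G_0^{-1}G\in Gl^\z\cap Gl^{\z^\perp}$. Hence the admissible $G$ form the coset $\l_{\z,\s_0}^{-1}(Gl^\z\cap Gl^{\z^\perp})$, and likewise the admissible $K$ form $\l_{\z,\t_0}^{-1}(Gl^\z\cap Gl^{\z^\perp})$. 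Consequently the map $(\z,H_1,H_2)\mapsto(\z,\l_{\z,\s_0}^{-1}H_1,\l_{\z,\t_0}^{-1}H_2)$ identifies $\FF_{(\s_0,\t_0)}$, fibrewise, with pairs in $(Gl^\z\cap Gl^{\z^\perp})^2$; analyticity of this identification comes from the analytic dependence of $\l_{\z,\s_0}$ and $\l_{\z,\t_0}$ on $\z$ supplied by Lemma \ref{propiedades  de mapa}.

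Finally I would trivialize the stabilizer $Gl^\z\cap Gl^{\z^\perp}$, which still depends on $\z$. Choosing $\h_+,\h_-$ as in the statement (matched to the dimensions of $\z$ and $\z^\perp$), I would produce a family $S_\z\in Gl(\h)$, depending real analytically on $\z$, with $S_\z(\z)=\h_+$ and $S_\z(\z^\perp)=\h_-$; conjugation $H\mapsto S_\z H S_\z^{-1}$ then carries $Gl^\z\cap Gl^{\z^\perp}$ isomorphically onto the fixed group $Gl^{\h_+}\cap Gl^{\h_-}$, and assembling these yields the diffeomorphism $\FF_{(\s_0,\t_0)}\simeq\GG^{\s_0}\times(Gl^{\h_+}\cap Gl^{\h_-})^2$. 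That $\FF_{(\s_0,\t_0)}$ is closed is immediate, being $\pepe^{-1}(\s_0,\t_0)$; the submanifold property I would read off either from these charts or from $\pepe$ being a submersion in Theorem \ref{pepe fbundle}. I expect the main obstacle to be precisely this last trivialization: building the intertwining family $S_\z$ real analytically and globally (rather than only locally, as in the frame-bundle charts $\Phi_{\z_0,T_0,T'_0}$ of Proposition \ref{manifold E}), together with the identification of the base $\GG^{\s_0}\cap\GG^{\t_0}$ with $\GG^{\s_0}$; both hinge on contractibility and triviality properties of the relevant spaces of complements.
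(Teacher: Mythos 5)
Your core construction is, in substance, the one the paper uses: the admissible pairs $(G,K)$ over a fixed $\z$ are described by composing with the operators $\l_{\z,\s_0}$, $\l_{\z,\t_0}$ (equivalently $\LL^\z_{\t_0,\s_0}$), and the resulting copies of the stabilizer are transported onto the fixed group $Gl^{\h_+}\cap Gl^{\h_-}$ using a fixed unitary $T$ with $T(\s_0)=\h_-$. Two remarks on details. First, the intertwining family you name as the main obstacle is already available globally and analytically: $S_\z:=T\,\l_{\s_0,\z}\,\l_{\z,\s_0}^{-1}$ satisfies $S_\z(\z)=\h_+$ and $S_\z(\z^\perp)=\h_-$, is defined for every $\z\in\GG^{\s_0}$, and is real analytic by the explicit formula in Lemma \ref{propiedades  de mapa}; this is exactly the combination that appears in the paper's map $\Psi$. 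Second, you cannot obtain the submanifold property from Theorem \ref{pepe fbundle}: the proof of that theorem invokes Proposition \ref{prop 48}, so that route is circular. The paper instead verifies that $\Psi$ is a chart compatible with the atlas of Proposition \ref{manifold E} and reads off the submanifold property from the fact that $Gl^{\h_+}\cap Gl^{\h_-}$ is a submanifold of $Gl^{\h_+}$; you would need to assemble your fibrewise identifications into such a chart to reach the same conclusion.

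The genuine gap is the step you flag at the end. Since $K\in Gl^\z$ and $K(\z^\perp)=\t_0$ force $\z\dot{+}\t_0=\h$, the first coordinate of any product decomposition obtained by fibering over $\z$ is exactly $\GG^{\s_0}\cap\GG^{\t_0}$, and this is a proper open subset of $\GG^{\s_0}$ whenever $\s_0\neq\t_0$ (one can always construct a complement of $\s_0$ passing through a vector of $\t_0\setminus\s_0$, and such a $\z$ cannot complement $\t_0$). Appealing to contractibility cannot close this: two contractible open subsets of a Banach space need not be real analytically diffeomorphic, and you produce no candidate map $\GG^{\s_0}\cap\GG^{\t_0}\to\GG^{\s_0}$. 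As written, your argument establishes $\FF_{(\s_0,\t_0)}\simeq(\GG^{\s_0}\cap\GG^{\t_0})\times(Gl^{\h_+}\cap Gl^{\h_-})^2$ and no more. You should also be aware that the same restriction is latent in the paper's own proof: the third coordinate $\Psi_3$ involves $\LL_\z=\LL^\z_{\t_0,\s_0}$ and is therefore only defined when $\z$ complements $\t_0$ as well, so obtaining the factor $\GG^{\s_0}$ literally requires an additional argument that neither your proposal nor a verbatim reading of the published proof supplies.
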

\begin{proof}
An alternative expression for the fibers is given by
\begin{equation*}\label{another fiber exp}
\FF_{(\s_0 , \t_0)}=\{  (\z, G,K) \in  \GG^{\s_0} \times Gl^\z \times Gl^\z :  G(\z^\perp)=\s_0, \, K(\z^\perp)=\t_0  \}.
\end{equation*} 
Indeed, for $(\z,G,K) \in \e$ such that $\pepe(\z,G,K)=(\s_0 , \t_0)$, one has $G(\z^\perp)=\s_0$ and $K(\z^\perp)=\t_0$. Since $\h=\z\oplus \z^\perp$, it follows that $\h=\z\dot{+}G(\z^\perp)=\z \dot{+} \s_0$, which means $\z \in \GG^{\s_0}$. The reversed inclusion is trivial. In particular, note 
$$
\FF_{(\s_0, \t_0)} \subseteq \bigsqcup_{\z \in \GG^{\s_0}} Gl^\z \times Gl^\z=\b_{\s_0^{\perp}}, 
$$
where the set on the  right-hand side is open in $\e$  (see Remark \ref{open and others}).

 Next recall that for $\z$ such that $\s_0 \dot{+} \z=\t_0 \dot{+} \z=\h$, there exists $\LL_\z:=\LL^\z_{\t_0,\s_0} \in Gl^\z$ such that $\LL_\z(\t_0)=\s_0$. Fix $T \in \u(\h)$ such that $T\s_0=\h_-$.  We consider the map $\Psi:\GG^{\s_0} \times (Gl^{\h+})^2 \to \b_{\s_0^\perp}$, $\Psi(\z,G,K)=(\Psi_1(\z,G,K), \Psi_2(\z,G,K) , \Psi_3(\z,G,K))$ defined by
 \begin{align*}
\Psi_1(\z,G,K) & =  \z ;\\
\Psi_2(\z,G,K) & =  \l_{\s_0, \z}^{-1} T^* G^{-1}T \l_{\s_0, \z} \l_{\z , \s_0}^{-1}  ;\\
\Psi_3(\z,G,K) & =  \LL_{\z}^{-1} \l_{\s_0, \z}^{-1} T^* K^{-1}T  \l_{\s_0, \z} \l_{\z, \s_0}^{-1} .  
\end{align*}
First we show that $\Psi$ is well defined.  Its second coordinate   takes values on $Gl^{\z}$, since
 $$
\z \stackrel{\l_{\z,\s_0}^{-1}}{\longrightarrow}\z
\stackrel{\l_{\s_0,\z}}{\longrightarrow}\s_0^\perp
\stackrel{T}{\longrightarrow}\h_+
\stackrel{G^{-1}}{\longrightarrow} \h_+
\stackrel{T^*}{\longrightarrow}\s_0^\perp
\stackrel{\l_{\s_0, \z}^{-1}}{\longrightarrow}\z.
$$
From this expression we also deduce that its third coordinate takes values on $Gl^\z$. Clearly,   $\Psi$ is bijective because one can easily write its inverse.  Take $\Phi_{\z_0, T_0,T_0'}$ a chart on $\e$ defined on $\GG^{\z_0} \times (Gl^{\h+})^2$ satisfying $\GG^{\s_0} \cap \GG^{\z_0} \neq \emptyset$. A similar computation to that at the end of the proof of Theorem \ref{manifold E} shows that $\Psi^{-1} \circ \Phi_{\z_0, T_0,T_0'}$ and  $\Phi_{\z_0, T_0,T_0'}^{-1} \circ \Psi$ are analytic functions. Thus, $\Psi$ is  also a chart compatible with the manifold structure of $\e$. 

Now note that for $G, K \in  Gl^{\h_-}$, we have
$$
\z^\perp \stackrel{\l_{\z,\s_0}^{-1}}{\longrightarrow}\s_0
\stackrel{\l_{\s_0,\z}}{\longrightarrow}\s_0
\stackrel{T}{\longrightarrow}\h_-
\stackrel{G^{-1}}{\longrightarrow} \h_-
\stackrel{T^*}{\longrightarrow}\s_0
\stackrel{\l_{\s_0, \z}^{-1}}{\longrightarrow}\s_0
$$
and
$$
\z^\perp \stackrel{\l_{\z,\s_0}^{-1}}{\longrightarrow}\s_0
\stackrel{\l_{\s_0,\z}}{\longrightarrow}\s_0
\stackrel{T}{\longrightarrow}\h_-
\stackrel{K^{-1}}{\longrightarrow} \h_-
\stackrel{T^*}{\longrightarrow}\s_0
\stackrel{\l_{\s_0, \z}^{-1}}{\longrightarrow}\s_0
\stackrel{\LL_{\z}^{-1}}{\longrightarrow}\t_0.
$$
Thus, we find that $\Psi(\GG^{\s_0} \times (Gl^{\h+} \cap Gl^{\h_-})^2)=\FF_{(\s_0 , \t_0)}$. Since $Gl^{\h+} \cap Gl^{\h_-}$ and $Gl^{\h_+}$ are block diagonal and block upper triangular operators with respect to the decomposition $\h_+ \oplus \h_-=\h$, it follows that $\GG^{\s_0} \times (Gl^{\h+} \cap Gl^{\h_-})^2$ is a submanifold of $\GG^{\s_0} \times (Gl^{\h+} )^2$.  Hence we have proved that $\FF_{(\s_0, \t_0)}$ is a submanifold of $\e$ and the stated diffeomorphism. It is straightforward to check that  $\FF_{(\s_0, \t_0)}$ is closed in $\e$.
\end{proof}

Now we need to recall a well-known construction regarding two projections (for details see for instance \cite{pr}). Take $P$, $Q$ two orthogonal projections such that $\|  P - Q \| <1$. From this last estimate, it can be shown that $S=QP + (1-Q)(1-P)$ is invertible, and the unitary $W=S|S|^{-1}$ satisfies $WPW^*=Q$.

\begin{defi}
Let $\s, \z \in \GG(\h)$ such that $\s\dot{+}\z=\h$, which  in terms of projections is equivalent to $\|P_\s-P_{\z^\perp}\|<1$. We denote by $W^{\z^\perp}_\s$ the unitary satisfying  $W_\s^{\z^\perp}P_\s (W_\s^{\z^\perp})^*=P_{\z^\perp}$ given by the above construction.
\end{defi}

Clearly,  
the map $\GG^\z \to \u(\h)$, $\s \mapsto W_\s^{\z^\perp}$ is real analytic by properties of the analytic functional calculus.

Our main result of this section now follows.

\begin{teo}\label{pepe fbundle}
The map $\pepe$ is a real analytic fiber bundle.
\end{teo}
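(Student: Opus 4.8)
The strategy is to establish local triviality of $\pepe$ by constructing explicit local trivializations over the open covering of $\Delta$ coming from the sets $\GG^\z \times \GG^\z$. Since we already know from Proposition \ref{prop 48} exactly what the fiber $\FF_{(\s_0,\t_0)}$ looks like (it is diffeomorphic to $\GG^{\s_0} \times (Gl^{\h_+}\cap Gl^{\h_-})^2$), and since $\pepe$ is a surjective real analytic map between real analytic manifolds, the only substantive thing left to verify is that every point of $\Delta$ has a neighborhood $U$ together with a real analytic diffeomorphism $\pepe^{-1}(U) \simeq U \times F$ commuting with the projections, where $F$ is the typical fiber.

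The plan is as follows. First I would fix $(\s_0,\t_0)\in\Delta$ lying in a component $\Delta_{ij}$, and choose a neighborhood of it on which things can be rectified; the natural candidate is $U:=\GG^{\z_0}\times\GG^{\z_0}$ intersected with $\Delta$ for a suitable fixed $\z_0$, or more precisely a neighborhood on which the pairs $(\s,\t)$ remain close enough to $(\s_0,\t_0)$ that $\|P_\s-P_{\s_0}\|<1$ and $\|P_\t-P_{\t_0}\|<1$. On such a neighborhood the unitaries $W^{\s_0}_{\s}$ and $W^{\t_0}_{\t}$ from the Definition preceding the theorem are defined and depend real analytically on $(\s,\t)$, and they satisfy $W^{\s_0}_{\s}P_{\s}(W^{\s_0}_{\s})^*=P_{\s_0}$, i.e. $W^{\s_0}_{\s}(\s)=\s_0$ and likewise $W^{\t_0}_{\t}(\t)=\t_0$. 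These unitaries are precisely the tool for transporting an arbitrary fiber $\FF_{(\s,\t)}$ back to the reference fiber $\FF_{(\s_0,\t_0)}$.

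Concretely, I would define the trivialization $\pepe^{-1}(U)\to U\times \FF_{(\s_0,\t_0)}$ by sending $(\z,G,K)$ with $\pepe(\z,G,K)=(\s,\t)$ to the pair $\bigl((\s,\t),\,(\z, (W^{\s_0}_\s)G(W^{\s_0}_\s)^*, (W^{\t_0}_\t)K(W^{\t_0}_\t)^*)\bigr)$ — adjusting the conjugations so that the image genuinely lands in $\pepe^{-1}(\s_0,\t_0)=\FF_{(\s_0,\t_0)}$. The key computation is that $(W^{\s_0}_\s)G(W^{\s_0}_\s)^*$ maps $\z$ to $\z$ (this needs $W^{\s_0}_\s\in Gl^\z$, which will require choosing $U$ so that the relevant subspace $\z$ is controlled, or instead conjugating only after applying $\LL$-type operators as in Proposition \ref{prop 48}) and sends $\z^\perp$ to $\s_0$; then $\pepe$ of the transported triple is $(\s_0,\t_0)$ as required. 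The inverse map is built from the adjoint unitaries and is manifestly real analytic, using that $\s\mapsto W^{\s_0}_\s$ and $\t\mapsto W^{\t_0}_\t$ are analytic; analyticity in the $(G,K)$ variables is just conjugation by analytically-varying invertibles. Compatibility with $\pepe$ — that the first coordinate of the output equals $\pepe(\z,G,K)$ — is immediate from the construction.

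The main obstacle I anticipate is the bookkeeping needed to make the transported triple actually land in the reference fiber while preserving the $Gl^\z$-structure, i.e. reconciling the unitary $W^{\s_0}_\s$ (which sends $\s\mapsto\s_0$ but need not lie in $Gl^\z$) with the requirement that the conjugated operators still fix $\z$. The cleanest fix is likely to compose the unitaries with the $\l$- and $\LL$-maps of Lemma \ref{propiedades de mapa} exactly as in the proof of Proposition \ref{prop 48}, so that the trivialization is really the chart $\Psi$ of that proposition varying analytically with the base point; in effect one shows the family of diffeomorphisms $\FF_{(\s,\t)}\simeq\GG^{\s}\times(Gl^{\h_+}\cap Gl^{\h_-})^2$ can be chosen to depend analytically on $(\s,\t)$ over $U$, and then reads off local triviality. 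Once the explicit formulas are assembled, checking analyticity of the trivialization and its inverse is routine given the analyticity statements already proved for $\LL^\z_{\s,\t}$, $\l_{\z,\s}$, and $W^{\z^\perp}_\s$.
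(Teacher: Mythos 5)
Your overall strategy --- transport each fiber $\FF_{(\s,\t)}$ to a fixed model fiber by means of the analytically varying unitaries $W$ combined with the $\l$- and $\LL$-operators, over a neighborhood of the form $\GG^{\z_0}\times\GG^{\z_0}$ --- is the right one and is the one the paper follows. But the concrete trivialization you write down fails, and the ``adjustment'' you defer to is not bookkeeping: it is the entire content of the proof. The formula $(\z,\,W^{\s_0}_\s G (W^{\s_0}_\s)^*,\, W^{\t_0}_\t K (W^{\t_0}_\t)^*)$ is not even a point of $\e$: conjugating $G$ by $W^{\s_0}_\s$ produces an operator preserving $W^{\s_0}_\s(\z)$, while conjugating $K$ by the \emph{different} unitary $W^{\t_0}_\t$ produces one preserving $W^{\t_0}_\t(\z)$, so the two outputs no longer share a common invariant subspace, which is exactly what membership in $\e$ requires. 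Moreover the $\z$-coordinate cannot be left unchanged: an element of $\pepe^{-1}(\s,\t)$ has $\z\dot{+}\s=\h$, not $\z\dot{+}\s_0=\h$, so the untransported $\z$ need not lie in the model fiber $\FF_{(\s_0,\t_0)}\simeq\GG^{\s_0}\times(Gl^{\h_+}\cap Gl^{\h_-})^2$ at all. Finally, the neighborhood $\{\|P_\s-P_{\s_0}\|<1,\ \|P_\t-P_{\t_0}\|<1\}\cap\Delta$ that you lean towards provides no fixed subspace transversal to all the pairs it contains, so there is nothing to anchor the construction to.

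The paper resolves all three points at once by choosing a common complement $\z_0$ of $(\s_0,\t_0)$ and working on $\Delta^{\z_0}=\{(\s,\t):\s\dot{+}\z_0=\t\dot{+}\z_0=\h\}$, where a \emph{single} unitary family $\s\mapsto W_\s^{\z_0^\perp}$ (followed by a fixed unitary $T$ carrying $\z_0^\perp$ to $\h_-$) is used for the $\z$-coordinate and the $G$-coordinate, while the asymmetry between $\s$ and $\t$ is absorbed by the operator $\LL^\z_{\t,\s}$, which lies in $Gl^\z$ and carries $\t$ to $\s$; the $\z$-coordinate is sent to $TW_\s^{\z_0^\perp}(\z)\in\GG^{\h_-}$, and the $G$- and $K$-coordinates are sandwiched between $\l_{\s,\z}\,\l_{\z,\s}^{-1}$ and $\l_{\s,\z}^{-1}$ before conjugation so that they become block-diagonal with respect to $\h_+\oplus\h_-$, landing in the fixed generic fiber $\GG^{\h_-}\times(Gl^{\h_+}\cap Gl^{\h_-})^2$. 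Verifying that these explicit composites map $\h_\pm$ onto $\h_\pm$ is the step your plan leaves open, and it is where the theorem is actually proved.
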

\begin{proof}
We have already observed above that $\pepe$ is well defined and onto. Clearly, the map $\pepe$ is analytic by using a local expression in  the charts of $\e$. 
Let us exhibit local trivializations for $\pepe$. Fix $(\s_0,\t_0)\in\Delta$, and a common complement $\z_0$ for $\s_0,\t_0$. Consider the open neighborhood 
$$
(\s_0,\t_0)\in\Delta^{\z_0}=\{(\s,\t)\in\Delta: \s\dot{+}\z_0=\h=\t\dot{+}\z_0\}.
$$
 Pick $(\z,G,K)\in\pepe^{-1}(\Delta^{\z_0})$. Denote $\s=G(\z^\perp)$ and $\t=K(\z^\perp)$. Since $(\s,\t)\in\Delta^{\z_0}$, we have that $\|P_\s-P_{\z_0^\perp}\|<1$ and $\|P_\t-P_{\z_0^\perp}\|<1$. Therefore, in particular,  there exists $W_\s^{\z_0^\perp} \in \u(\h)$ such that $W_\s^{\z_0^\perp}(\s)=\z_0^\perp$. Let us fix $T\in\u(\h)$ such that $T(\z_0^\perp)=\h_-$.  As usual $\h_+$, $\h_-$ are fixed subspaces such that $\h_- \oplus \h_+=\h$ of the appropriate dimensions determined by $(\s_0 , \t_0)$. Note also that since $G,K\in Gl^\z$, $\z\oplus\z^\perp=\h$ implies that $G(\z)\dot{+}G(\z^\perp)=\z\dot{+}\s=\h$ and $K(\z)\dot{+}K(\z^\perp)=\z\dot{+}\t=\h$. Then 
$$
\h=TW_\s^{\z_0^\perp}(\z)\dot{+}TW_\s^{\z_0^\perp}(\s)=TW_\s^{\z_0^\perp}(\z)\dot{+}\h_-,
$$
which means
\begin{equation}\label{twz}
TW_\s^{\z_0^\perp}(\z)\in \GG^{\h_-}.
\end{equation}

Let us define the local trivialization at the neighbourhood $\Delta^{\z_0}$. The idea is that the definition of $\Delta^{\z_0}$ allows us to define  $ W_\s^{\z_0^\perp}$ to move  a fiber $\FF_{(\s, \t)}$ to the generic fiber
$$
\FF:=\GG^{\h_-}\times \left(Gl^{\h_+}\cap Gl^{\h_-}\right)^2 \simeq \FF_{(\s_0, \t_0)},
$$
which is a diffeomorphism by Proposition \ref{prop 48}. Consider
$$
\Phi:\pepe^{-1}(\Delta^{\z_0})\to \Delta^{\z_0}\times \FF,
$$
$$
 \Phi(\z,G,K)=(G(\z^\perp),K(\z^\perp), u(\z,G,K), a(\z,G,K), b(\z,G,K)).
$$
Again we abbreviate $\s=G(\z^\perp)$ and $\t=K(\z^\perp)$ to define the last coordinates:
\begin{align*}
  u(\z,G,K) & =TW_\s^{\z_0^\perp}(\z); \\
a(\z,G,K) & =T W_\s^{\z_0^\perp} \l_{\s,\z} \l_{\z,\s}^{-1}G^{-1}\l_{\s,\z}^{-1} (W_\s^{\z_0^\perp})^*T^*; \\
b(\z,G,K) & =TW_\s^{\z_0^\perp}\l_{\s,\z} \l_{\z,\s}^{-1}K^{-1}(\LL^\z_{\t,\s})^{-1} \l_{\s,\z}^{-1} (W_\s^{\z_0^\perp})^*T^*.
\end{align*}

The first coordinate map clearly belongs to $\GG^{\h_-}$ as remarked in (\ref{twz}). Let us see  that $a$ and $b$ are also well defined. Note that under the current conditions, we have 
$$
\z\dot{+}\s=\z_0\dot{+}\s=\z\dot{+}\t=\z_0\dot{+}\t=\h.
$$
Also 
$
\s\dot{+}\z=\h$ if and only if $\|P_\s+P_\z-1\|<1$. This is equivalent to $\|(1-P_\s)+(1-P_\z)-1\|<1$, or $\s^\perp\dot{+}\z^\perp=\h$. Similarly for the other direct sum decompositions. This implies that  the $\l$ and $\LL$ operators used above are well defined.  In particular $\l_{\z,\s}\in Gl^\z$ which maps $\s$ onto $\z^\perp$, and $\l_{\s,\z}\in Gl^\s$ which maps $\z$ onto $\s^\perp$. 

Next let us check that $a$ and $b$ belong to $Gl^{\h_+}\cap Gl^{\h_-}$, i.e., that they map $\h_+$ onto $\h_+$ and $\h_-$ onto $\h_-$.
For the case of $a$:
$$
\h_-\stackrel{T^*}{\longrightarrow}\z_0^\perp\stackrel{(W_\s^{\z_0^\perp})^*}{\longrightarrow}\s\stackrel{\l_{\s,\z}^{-1}}{\longrightarrow}\s\stackrel{G^{-1}}{\longrightarrow}\z^\perp\stackrel{\l_{\z,\s}^{-1}}{\longrightarrow}\s\stackrel{\l_{\s,\z}}{\longrightarrow}\s\stackrel{W_\s^{\z_0^\perp}}{\longrightarrow}\z_0^\perp\stackrel{T}{\longrightarrow}\h_-.
$$
and 
$$
\h_+\stackrel{T^*}{\longrightarrow}\z_0\stackrel{(W_\s^{\z_0^\perp})^*}{\longrightarrow}\s^\perp \stackrel{\l_{\s,\z}^{-1}}{\longrightarrow}\z\stackrel{G^{-1}}{\longrightarrow}\z\stackrel{\l_{\z,\s}^{-1}}{\longrightarrow}\z\stackrel{\l_{\s,\z}}{\longrightarrow}\s^\perp\stackrel{W_\s^{\z_0^\perp}}{\longrightarrow}\z_0\stackrel{T}{\longrightarrow}\h_+.
$$
The verification that the map $b$ is diagonal is similar. The map $\Phi$ can be clearly reversed, once it is well defined in the appropriate neighbourhoods. It is a real analytic diffeomorphism, which clearly trivializes the map $\pepe$.
\end{proof}

\section{Topology of $\Delta$}\label{topology}

Another map related to the manifold $\e$ is the following
$$
\pi: \e \to \GG(\h), \, \, \, \pi(\z, G,K)=\z. 
$$
Let us prove that the map $\pi$ is a also a real analytic fiber bundle. For a subspace $\z_0\in\GG_{ij} \subseteq \GG(\h)$, fix two subspaces $\h_-$ and $\h_+$ such that $\dim \h_-=j$, $\dim \h_+=i$  and $\h_-\oplus \h_+=\h$. Then, 
$$
\pi^{-1}(\z_0)=\{(\z_0,G,K): G,K\in Gl^{\z_0}\}\simeq Gl^{\z_0}\times Gl^{\z_0}\simeq Gl^{\h_+}\times Gl^{\h_+}.
$$

\begin{prop}
The map $\pi$ is a real analytic fiber bundle.
\end{prop}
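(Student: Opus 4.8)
The plan is to produce explicit local trivializations over the open neighbourhoods $\GG^{\z_0^\perp}$, $\z_0\in\GG(\h)$, in the same spirit as the proof of Theorem \ref{pepe fbundle}, transporting each fibre $Gl^\z\times Gl^\z$ onto a fixed model by conjugating with the analytic unitaries $W_\z^{\z_0}$.

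First, analyticity of $\pi$ is immediate: read in any chart $\Phi_{\z_0,T_0,T'_0}$ of $\e$ (Proposition \ref{manifold E}) the first coordinate is simply $\z$, so $\pi$ becomes the projection onto an open subset of $\GG(\h)$. Next, recall that $\|P_\z-P_{\z_0}\|<1$ is equivalent to $\z\dot{+}\z_0^\perp=\h$, i.e. to $\z\in\GG^{\z_0^\perp}$; in particular $\z_0\in\GG^{\z_0^\perp}$, so the family $\{\GG^{\z_0^\perp}\}_{\z_0\in\GG(\h)}$ is an open cover of $\GG(\h)$, and by Remark \ref{open and others} the preimage $\pi^{-1}(\GG^{\z_0^\perp})=\b_{\z_0}$ is open in $\e$.

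To trivialize over $\GG^{\z_0^\perp}$, fix $\z_0\in\GG_{ij}$ together with $\h_\pm$ as in the statement ($\dim\h_+=i=\dim\z_0$, $\dim\h_-=j$, $\h_+\oplus\h_-=\h$) and a unitary $T\in\u(\h)$ with $T(\z_0)=\h_+$. For $\z\in\GG^{\z_0^\perp}$ the unitary $W_\z^{\z_0}$ is defined, satisfies $W_\z^{\z_0}(\z)=\z_0$, and depends real analytically on $\z$. The chain
$$
\h_+\stackrel{T^*}{\longrightarrow}\z_0\stackrel{(W_\z^{\z_0})^*}{\longrightarrow}\z\stackrel{G}{\longrightarrow}\z\stackrel{W_\z^{\z_0}}{\longrightarrow}\z_0\stackrel{T}{\longrightarrow}\h_+
$$
shows that conjugation by $TW_\z^{\z_0}$ carries $Gl^\z$ onto $Gl^{\h_+}$, which leads to
$$
\Phi:\pi^{-1}(\GG^{\z_0^\perp})\to \GG^{\z_0^\perp}\times\left(Gl^{\h_+}\times Gl^{\h_+}\right),
$$
$$
\Phi(\z,G,K)=\left(\z,\, TW_\z^{\z_0}G(W_\z^{\z_0})^*T^*,\, TW_\z^{\z_0}K(W_\z^{\z_0})^*T^*\right).
$$
By construction $\Phi$ intertwines $\pi$ with the projection onto the first factor, and its inverse $(\z,g,k)\mapsto(\z,(W_\z^{\z_0})^*T^*gTW_\z^{\z_0},(W_\z^{\z_0})^*T^*kTW_\z^{\z_0})$ is well defined, since the reverse conjugation carries $Gl^{\h_+}$ back onto $Gl^\z$.

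The remaining, and main, point is to check that $\Phi$ and $\Phi^{-1}$ are real analytic. This rests on the real-analytic dependence $\z\mapsto W_\z^{\z_0}$ noted after the definition of $W_\s^{\z^\perp}$ (via the analytic functional calculus), on the analyticity of multiplication and inversion, and on the fact that $Gl^\z$ and $Gl^{\h_+}$ are submanifolds of $\b(\h)$; comparing $\Phi$ against a chart $\Phi_{\z_0,T_0,T'_0}$ as in Proposition \ref{manifold E} renders these verifications explicit. The algebraic checks (well-definedness, bijectivity, compatibility with $\pi$) are then routine conjugation-diagram computations.
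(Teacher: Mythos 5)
Your proposal is correct and follows essentially the same route as the paper: local trivializations over the open sets $\GG^{\z_0^\perp}$ obtained by conjugating each fibre with the analytically varying unitaries $W_\z^{\z_0}$. The only (cosmetic) difference is that you compose with a further fixed unitary $T$ to land in the model fibre $Gl^{\h_+}\times Gl^{\h_+}$, whereas the paper trivializes onto $\pi^{-1}(\z_0)\simeq Gl^{\z_0}\times Gl^{\z_0}$ and identifies that with $Gl^{\h_+}\times Gl^{\h_+}$ separately.
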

\begin{proof}
Take arbitrary $i$, $j$ such that $i + j=\infty$ and fixed subspaces $\h_+$ and $\h_-$ as above. We saw that the fibers are Banach-Lie groups isomorphic to $Gl^{\h_+}\times Gl^{\h_+}$. Let us construct local trivializations for $\pi$. Fix $\z_0\in\GG_{ij}$,  and consider the neighborhood 
$$
\z_0\in\GG^{\z_0^\perp}=\{\s\in\GG_{ij}: \|P_\s-P_{\z_0}\|<1\}.
$$
This allows the existence of a real analytic map $\GG^{\z_0^\perp} \ni\s\mapsto W_\s^{\z_0}\in\u(\h)$ which satisfies  $W_\s^{\z_0}(\s)=\z_0$. Then we have the diffeomorphism
$$
\pi^{-1}(\GG^{\z_0^\perp})\to \GG^{\z_0^\perp} \times \pi^{-1}(\z_0), \ (\s,G,K)\mapsto \s\times(\z_0, W_\s^{\z_0}G (W_\s^{\z_0})^*, W_\s^{\z_0}K (W_\s^{\z_0})^*),
$$
which trivializes $\pi$. Note that the above map is well defined, since clearly 
$$
W_\s^{\z_0}G (W_\s^{\z_0})^*, W_\s^{\z_0}K (W_\s^{\z_0})^*\in Gl^{\z_0},
$$
and it can be easily reversed.
\end{proof}

As we have observed at the end of Section \ref{section 1}, we are interested in topology of the connected component $\Delta_\infty=\Delta \cap(\GG_\infty \times \GG_\infty)$. It can be studied by considering  infinite and co-infinite dimensional subspaces in the previous constructions. We begin with the connected component 
$\e_\infty$ of $\e$ (see Remark \ref{open and others}). We have the following consequence.

\begin{coro}\label{coro 47}
 $\e_\infty$ is contractible.
\end{coro}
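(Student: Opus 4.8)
The plan is to exploit the real analytic fiber bundle $\pi:\e\to\GG(\h)$ just established, restricted to the connected component $\e_\infty$. Since $\e_\infty=\pi^{-1}(\GG_\infty)$ and $\GG_\infty$ is a (open) connected component of $\GG(\h)$, the restriction $\pi|_{\e_\infty}:\e_\infty\to\GG_\infty$ is again a real analytic fiber bundle, now over a base which is contractible by Kuiper's theorem, as recalled in the introduction. The strategy is then to identify the fiber, prove that it too is contractible, and finally deduce contractibility of the total space $\e_\infty$ from the fact that a fiber bundle over a paracompact contractible base is trivial.

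First I would describe the fiber. Over $\z_0\in\GG_\infty$ we have $\pi^{-1}(\z_0)=Gl^{\z_0}\times Gl^{\z_0}\simeq Gl^{\h_+}\times Gl^{\h_+}$, where now, since $i=j=\infty$, both $\h_+$ and $\h_-$ are infinite dimensional. Writing an element of $Gl^{\h_+}$ as a block upper triangular matrix $\bigl(\begin{smallmatrix} A & B \\ 0 & D\end{smallmatrix}\bigr)$ with respect to $\h=\h_+\oplus\h_-$ (with $A\in Gl(\h_+)$, $D\in Gl(\h_-)$ and $B\in\b(\h_-,\h_+)$ arbitrary), the homotopy that scales the off-diagonal block, $t\mapsto\bigl(\begin{smallmatrix}A & (1-t)B \\ 0 & D\end{smallmatrix}\bigr)$, stays inside $Gl^{\h_+}$ and gives a deformation retraction of $Gl^{\h_+}$ onto the block diagonal subgroup $Gl(\h_+)\times Gl(\h_-)$. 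Because $\h_+$ and $\h_-$ are infinite dimensional, each of $Gl(\h_+)$ and $Gl(\h_-)$ is contractible by Kuiper's theorem, whence $Gl^{\h_+}$, and thus the fiber $Gl^{\h_+}\times Gl^{\h_+}$, is contractible.

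To conclude, I would invoke that a locally trivial fiber bundle over a paracompact contractible base is (fiber-homotopy) trivial. Banach manifolds are metrizable, hence paracompact, and $\GG_\infty$ is contractible, so $\pi|_{\e_\infty}$ is trivial and $\e_\infty$ is homotopy equivalent to the product $\GG_\infty\times(Gl^{\h_+}\times Gl^{\h_+})$ of contractible spaces; therefore $\e_\infty$ is contractible. The main obstacle is precisely this last step: passing from the contractibility of base and fiber to that of the total space. Using only the long exact homotopy sequence of the fibration one obtains a priori merely weak contractibility, so the argument genuinely relies on either the triviality theorem over contractible paracompact bases or, equivalently, on the fact that Banach manifolds have the homotopy type of CW complexes, which upgrades weak contractibility to contractibility via Whitehead's theorem.
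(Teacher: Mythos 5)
Your proposal is correct and follows essentially the same route as the paper: both use the fiber bundle $\pi|_{\e_\infty}:\e_\infty\to\GG_\infty$, the contractibility of the fiber $Gl^{\h_+}\times Gl^{\h_+}$ (via the block upper-triangular retraction onto $Gl(\h_+)\times Gl(\h_-)$ and Kuiper's theorem) and of the base $\GG_\infty$. The only cosmetic difference is the last step: the paper passes through the long exact homotopy sequence and Palais's theorem to upgrade weak contractibility, which is the second of the two equivalent options you mention.
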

\begin{proof}
The manifold $\e_\infty$ is the total space of the fiber bundle $\pi|_{\e_\infty} : \e_\infty \to \GG_\infty$ with contractible fibers isomorphic to $Gl^{\h_+} \times Gl^{\h_+}$ ($\dim \h_{\pm}=\infty$) and contractible base space $\GG_\infty$ by Kuiper's theorem. Therefore it has trivial homotopy groups,  and being a manifold modeled in a Banach space, is then contractible (see \cite{palais}).
\end{proof}

Our main result on the topology of $\Delta_\infty$ follows.
\begin{teo}\label{delta contractil}
The open dense subset $\Delta_\infty\subset \GG_\infty \times \GG_\infty$ is contractible.
\end{teo}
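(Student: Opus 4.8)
The plan is to exploit the fiber bundle $\pepe:\e\to\Delta$ from Theorem \ref{pepe fbundle} together with the contractibility of $\e_\infty$ established in Corollary \ref{coro 47}. Since $\pepe(\e_{ij})=\Delta_{ji}$, the preimage of the component $\Delta_\infty=\Delta_{\infty,\infty}$ is exactly $\e_\infty$, and the restriction $\pepe|_{\e_\infty}:\e_\infty\to\Delta_\infty$ is again a real analytic fiber bundle, now with contractible total space. The strategy is to show that its fibers are contractible, deduce from the long exact homotopy sequence that $\Delta_\infty$ has trivial homotopy groups, and then invoke Palais' theorem to upgrade this to contractibility.

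First I would identify and analyze the fiber. By Proposition \ref{prop 48}, for $(\s_0,\t_0)\in\Delta_\infty$ one has $\FF_{(\s_0,\t_0)}\simeq\GG^{\s_0}\times(Gl^{\h_+}\cap Gl^{\h_-})^2$, where now $\dim\h_+=\dim\h_-=\infty$. I would check each factor is contractible. The factor $\GG^{\s_0}$ is contractible, being homeomorphic to a Banach space by Remark \ref{rem on gz}. For $Gl^{\h_+}\cap Gl^{\h_-}$, I note that an invertible operator lies in this intersection precisely when it is block diagonal with respect to $\h=\h_+\oplus\h_-$, so $Gl^{\h_+}\cap Gl^{\h_-}\cong Gl(\h_+)\times Gl(\h_-)$. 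Because both $\h_+$ and $\h_-$ are infinite-dimensional, each factor is contractible by Kuiper's theorem, hence so is the product and its square. Thus $\FF_{(\s_0,\t_0)}$ is a finite product of contractible spaces and therefore contractible.

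Next I would run the long exact sequence of homotopy groups for the fibration $\pepe|_{\e_\infty}$. As $\Delta_\infty$ is a Banach manifold it is metrizable, hence paracompact, so this locally trivial bundle is a Serre fibration. With both the total space $\e_\infty$ and the typical fiber contractible, every homomorphism in the sequence sits between trivial groups, forcing $\pi_n(\Delta_\infty)=0$ for all $n\geq 0$. Finally, $\Delta_\infty$ is an open subset of $\GG_\infty\times\GG_\infty$ and thus a manifold modeled on a Banach space; by Palais' theorem, exactly as in the proof of Corollary \ref{coro 47}, a Banach manifold with trivial homotopy groups is contractible, which yields the claim.

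The step I expect to be the crux is the contractibility of the fiber, and specifically the observation that the diagonal block group $Gl^{\h_+}\cap Gl^{\h_-}\cong Gl(\h_+)\times Gl(\h_-)$ becomes contractible precisely because we restrict to infinite- and co-infinite-dimensional subspaces: this is where Kuiper's theorem enters in the fiber direction, paralleling its role for the base and for the fibers of $\pi$ in Corollary \ref{coro 47}. In the remaining components $\Delta_{ij}$ with $i$ or $j$ finite this group carries a finite-dimensional $Gl$ factor and is not contractible, which explains why $\Delta_\infty$ is the only component whose contractibility requires an argument.
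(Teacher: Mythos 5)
Your proposal is correct and follows essentially the same route as the paper: restrict the bundle $\pepe$ to $\e_\infty$, use Corollary \ref{coro 47} for the total space, identify the fiber via Proposition \ref{prop 48} as $\GG^{\s_0}\times(Gl^{\h_+}\cap Gl^{\h_-})^2$ and check each factor is contractible (Remark \ref{rem on gz} and Kuiper's theorem), then conclude via the homotopy sequence and Palais' theorem. The only difference is that you make explicit the Serre-fibration justification and the remark about why the other components $\Delta_{ij}$ are excluded, both of which the paper leaves implicit.
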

\begin{proof}
The fiber bundle $\pepe|_{\e_\infty}:\e_\infty \to\Delta_\infty$ has contractible total space (Corollary \ref{coro 47}). From Proposition \ref{prop 48} we know that for any $(\s_0, \t_0) \in \Delta_\infty$ the fiber $\FF_{(\s_0, \t_0)}$ is diffeomorphic to $\GG^{\s_0} \times (Gl^{\h_+}\cap Gl^{\h_-})^2$. Recall that $\GG^{\s_0}$ is contractible by Remark \ref{rem on gz}, and $Gl^{\h_+}\cap Gl^{\h_-}\simeq Gl(\h_+) \times Gl(\h_-)$ is also contractible by Kuiper's theorem  ($\dim \h_{\pm}=\infty$). Thus, $\pepe|_{\e_\infty}$ has also contractible  fibers.  We thus get that  $\Delta_\infty$ is a Banach manifold with  trivial homotopy type, and hence it is contractible \cite{palais}.
\end{proof}

Also we have the following consequence. 
\begin{coro}
The maps $\pepe|_{\e_\infty}$ and $\pi |_{\e_\infty}$ are in fact trivial bundles.
\end{coro}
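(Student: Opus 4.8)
The plan is to deduce triviality of both bundles from the contractibility of their base spaces, using the classical homotopy-invariance principle for fiber bundles: \emph{a locally trivial fiber bundle over a paracompact, contractible base is trivial}. Recall why this holds. If $H\colon B\times[0,1]\to B$ contracts $B$ to a point, then $\mathrm{id}_B=H_0$ is homotopic to the constant map $H_1$; since the pullbacks of a given bundle along homotopic maps into a paracompact base are isomorphic (the covering homotopy theorem), the bundle, being the pullback of itself along $\mathrm{id}_B$, is isomorphic to its pullback along the constant map, and the latter is the product bundle. So the whole corollary reduces to checking the hypotheses of this principle for the two restricted maps.

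First I would record that $\pi|_{\e_\infty}$ and $\pepe|_{\e_\infty}$ are genuinely fiber bundles over $\GG_\infty$ and $\Delta_\infty$ respectively. This is immediate from what precedes: $\pi$ and $\pepe$ are real analytic fiber bundles (the latter by Theorem \ref{pepe fbundle}, the former by the Proposition just above), and $\e_\infty=\pi^{-1}(\GG_\infty)=\pepe^{-1}(\Delta_\infty)$ is a single connected component of the total space, so the restriction of each bundle to it is again a locally trivial fiber bundle over the corresponding component of the base. Next I would verify paracompactness and contractibility of the two bases. Both $\GG_\infty$ and $\Delta_\infty$ are Banach manifolds ($\GG_\infty$ is a component of $\GG(\h)$, and $\Delta_\infty$ is an open subset of $\GG_\infty\times\GG_\infty$), hence metrizable and in particular paracompact; their contractibility has already been established, namely $\GG_\infty$ by Kuiper's theorem \cite{kuiper} and $\Delta_\infty$ by Theorem \ref{delta contractil}. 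Applying the homotopy-invariance principle to each bundle then yields that $\pi|_{\e_\infty}$ and $\pepe|_{\e_\infty}$ are trivial.

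The main obstacle is the regularity in which one wants to assert triviality. The covering homotopy theorem over paracompact bases is standard in the continuous category and yields a topological trivialization with no difficulty in the Banach manifold setting. Producing a genuinely \emph{real analytic} trivialization, however, would require realizing the contracting homotopies of $\GG_\infty$ and $\Delta_\infty$ in a way that lifts analytically, which is more than bare contractibility provides. Here I would either state the corollary in the topological category, or, to retain the analytic conclusion, invoke the finer structure already at hand, namely that these are bundles with Banach--Lie structure group and contractible fibers modeled on Banach spaces, and appeal to the smoothing/triviality machinery for such bundles over contractible Banach manifolds (cf. \cite{beltita, palais}), exactly as contractibility of $\e_\infty$ and $\Delta_\infty$ was extracted from triviality of homotopy groups in Corollary \ref{coro 47} and Theorem \ref{delta contractil}.
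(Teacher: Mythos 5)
Your proposal is correct and follows essentially the same route as the paper, whose entire proof is the one-line observation that both bundles have contractible base (as well as total space and fibers): triviality is deduced from contractibility of the paracompact base via the covering homotopy principle. Your additional remarks on paracompactness and on the topological-versus-real-analytic category make explicit a point the paper leaves implicit, but they do not change the argument.
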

\begin{proof}
Both bundles have contractible base space, total space and fibers.
\end{proof}
\noi 
\textbf{Acknowledgment.}
This work was supported by the grant PICT 2019 04060 (FONCyT - ANPCyT, Argentina), (PIP 2021/2023 11220200103209CO),  ANPCyT (2015 1505/ 2017 0883) and FCE-UNLP (11X974).

{\small

\noi E. Andruchow, {\sc  {Instituto Argentino de Matem\'atica, `Alberto P. Calder\'on', CONICET, Saavedra 15 3er. piso,
(1083) Buenos Aires, Argentina }} and {\sc Universidad Nacional de General Sarmiento, J.M. Gutierrez 1150, (1613) Los Polvorines, Argentina}

\noi  e-mail: eandruch@campus.ungs.edu.ar

\medskip

\noi E. Chiumiento, {\sc  {Instituto Argentino de Matem\'atica, `Alberto P. Calder\'on', CONICET, Saavedra 15 3er. piso,
}(1083) Buenos Aires, Argentina}  and {\sc Departamento de Matem\'aica y Centro de Matem\'atica La Plata, Universidad Nacional de La Plata, Calle 50 y 115 (1900) La Plata, Argentina}

\noi  e-mail: eduardo@mate.unlp.edu.ar

}

\end{document}